\theoremstyle{definition}
\newtheorem{theorem}{Theorem}[section]
\newtheorem{lemma}[theorem]{Lemma}
\theoremstyle{definition}
\newtheorem{definition}[theorem]{Definition}
\newtheorem{example}[theorem]{Example}
\theoremstyle{remark}
\newtheorem{remark}[theorem]{Remark}
\theoremstyle{definition}
\newcounter{cnt}
\def\mydggeometry{\makeatletter\dg@YGRID=1\dg@XGRID=20\unitlength=0.003pt\makeatother}
\makeatother \theoremstyle{remark}
\numberwithin{equation}{section}
\let\bwdg\bigwedge
\def\bigwedge{{\textstyle\bwdg}}
\newcommand{\nc}{\newcommand}
\newcommand{\rnc}{\renewcommand}
\nc{\cal}{\mathcal} \nc{\goth}{\mathfrak} \rnc{\bold}{\mathbf}
\nc\bomega{{\mbox{\boldmath $\omega$}}} \nc\bpsi{{\mbox{\boldmath $\Psi$}}}
 \nc\balpha{{\mbox{\boldmath $\alpha$}}}
 \nc\bpi{{\mbox{\boldmath $\pi$}}}
 \nc\bvpi{{\mbox{\boldmath $\varpi$}}}
\nc\chara{\operatorname{ch}}
  \nc\bxi{{\mbox{\boldmath $\xi$}}}
\nc\bmu{{\mbox{\boldmath $\mu$}}} \nc\bcN{{\mbox{\boldmath $\cal{N}$}}} \nc\bcm{{\mbox{\boldmath $\cal{M}$}}} \nc\blambda{{\mbox{\boldmath
$\lambda$}}}\nc\bnu{{\mbox{\boldmath $\nu$}}}
\def\section{\def\@secnumfont{\mdseries}\@startsection{section}{1}%
  \z@{.7\linespacing\@plus\linespacing}{.5\linespacing}%
  {\normalfont\scshape\centering}}
\def\subsection{\def\@secnumfont{\bfseries}\@startsection{subsection}{2}%
  {\parindent}{.5\linespacing\@plus.7\linespacing}{-.5em}%
  {\normalfont\bfseries}}
 \nc{\Hom}{\operatorname{Hom}}
  \nc{\mode}{\operatorname{mod}}
\nc{\End}{\operatorname{End}} \nc{\wh}[1]{\widehat{#1}} \nc{\Ext}{\operatorname{Ext}} \nc{\ch}{\text{ch}} \nc{\ev}{\operatorname{ev}}
\nc{\Ob}{\operatorname{Ob}} \nc{\soc}{\operatorname{soc}} \nc{\rad}{\operatorname{rad}} \nc{\head}{\operatorname{head}}
 \nc{\Cal}{\cal} \nc{\Xp}[1]{X^+(#1)} \nc{\Xm}[1]{X^-(#1)}
\nc{\on}{\operatorname} \nc{\Z}{{\bold Z}} \nc{\J}{{\cal J}}  \nc{\Q}{{\bold Q}}
\nc{\N}{{\bold N}}  \nc\boa{\bold a} \nc\bob{\bold b} \nc\boc{\bold c} \nc\bod{\bold d} \nc\boe{\bold e} \nc\bof{\bold f} \nc\bog{\bold g}
\nc\boh{\bold h} \nc\boi{\bold i} \nc\boj{\bold j} \nc\bok{\bold k} \nc\bol{\bold l} \nc\bom{\bold m} \nc\bon{\mathbb n} \nc\boo{\bold o}
\nc\bop{\bold p} \nc\boq{\bold q} \nc\bor{\bold r} \nc\bos{\bold s} \nc\boT{\bold t} \nc\boF{\bold F} \nc\bou{\bold u} \nc\bov{\bold v}
\nc\bow{\bold w} \nc\boz{\bold z}\nc\ba{\bold A} \nc\bb{\bold B} \nc\bc{\mathbb C} \nc\bd{\bold D} \nc\be{\bold E} \nc\bg{\bold
G} \nc\bh{\bold H} \nc\bi{\bold I} \nc\bj{\bold J} \nc\bk{\bold K} \nc\bl{\bold L} \nc\bm{\bold M} \nc\bn{\mathbb N} \nc\bo{\bold O} \nc\bp{\bold
P} \nc\bq{\bold Q} \nc\br{\bold R} \nc\bs{\bold S} \nc\bt{\bold T} \nc\bu{\bold U} \nc\bv{\bold V} \nc\bw{\bold W} \nc\bz{\mathbb Z} \nc\bx{\bold
x} \nc\KR{\bold{KR}} \nc\rk{\bold{rk}} \nc\het{\text{ht }}
\nc\toa{\tilde a} \nc\tob{\tilde b} \nc\toc{\tilde c} \nc\tod{\tilde d} \nc\toe{\tilde e} \nc\tof{\tilde f} \nc\tog{\tilde g} \nc\toh{\tilde h}
\nc\toi{\tilde i} \nc\toj{\tilde j} \nc\tok{\tilde k} \nc\tol{\tilde l} \nc\tom{\tilde m} \nc\ton{\tilde n} \nc\too{\tilde o} \nc\toq{\tilde q}
\nc\tor{\tilde r} \nc\tos{\tilde s} \nc\toT{\tilde t} \nc\tou{\tilde u} \nc\tov{\tilde v} \nc\tow{\tilde w} \nc\toz{\tilde z} \nc\woi{w_{\omega_i}}
\begin{document}
\setcounter{section}{0}
\setcounter{tocdepth}{1}


\title{An extension of a second irreducibility theorem of I. Schur}

\author[Anuj Jakhar]{Anuj Jakhar}
\author[Ravi Kalwaniya]{Ravi Kalwaniya}
\address[]{Department of Mathematics, Indian Institute of Technology (IIT) Madras}

\email[Anuj Jakhar]{anujjakhar@iitm.ac.in \\ anujiisermohali@gmail.com}
\email[Ravi Kalwaniya]{ravikalwaniya3@gmail.com}


\subjclass [2010]{11C08, 11R04}
\keywords{Irreducibility, Polynomials, Newton Polygons}

\begin{abstract}
\noindent Let $n \neq 8$ be a positive integer such that $n+1 \neq 2^u$ for any integer $u\geq 2$. Let $\phi(x)$ belonging to $\Z[x]$ be a monic polynomial which is irreducible modulo all primes less than or equal to  $n+1$. Let $a_j(x)$ with $0\leq j\leq n-1$ belonging to $\Z[x]$  be polynomials having degree less than $\deg\phi(x)$. Assume that the content of $(a_na_0(x))$ is not divisible by any prime less than or equal to $n+1$. In this paper, we prove that the polynomial $f(x) = a_n\frac{\phi(x)^n}{(n+1)!}+ \sum\limits_{j=0}^{n-1}a_j(x)\frac{\phi(x)^{j}}{(j+1)!}$ is irreducible over the field $\Q$ of rational numbers. This generalises a well-known result of Schur which states that the polynomial   $\sum\limits_{j=0}^{n}a_j\frac{x^{j}}{(j+1)!}$ with $a_j \in \Z$ and $|a_0| = |a_n| = 1$ is irreducible over $\Q$. We illustrate our result through examples.
\end{abstract}
\maketitle

\section{Introduction and statements of results}\label{intro}

In 1929, Schur\cite{Sch 1} proved the following result.
\begin{theorem}
	Let $n \neq 8$ be a positive integer such that $n+1 \neq 2^u$ for any integer $u\geq 2$.  Let $a_0, a_1, \cdots, a_n$ be integers with $|a_0| = |a_n| = 1$. Then the polynomial   $\sum\limits_{j=0}^{n}a_j\frac{x^{j}}{(j+1)!}$  is irreducible over the field $\Q$ of rationals.
\end{theorem}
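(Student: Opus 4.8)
The plan is to pass to an integer polynomial and read off the factorization constraints from $p$-adic Newton polygons. First I would clear denominators, setting $g(x) = (n+1)!\,f(x) = \sum_{j=0}^{n} c_j x^j$ with $c_j = a_j\frac{(n+1)!}{(j+1)!} \in \Z$. Since $|a_n|=1$ the leading coefficient $c_n=\pm1$, so $g$ is primitive and, after adjusting a sign, monic; by Gauss's lemma $f$ is irreducible over $\Q$ exactly when $g$ has no nontrivial factorization in $\Z[x]$, and in any such $g=g_1g_2$ both factors may be taken monic. The hypotheses $|a_0|=|a_n|=1$ serve to pin the two endpoints of every Newton polygon: for each prime $p$ one has $v_p(c_n)=0$ and $v_p(c_0)=v_p((n+1)!)$, whereas the interior vertices obey only the inequality $v_p(c_j)\ge v_p\!\big(\tfrac{(n+1)!}{(j+1)!}\big)$, since the middle coefficients $a_j$ are unconstrained.

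Assume for contradiction that $g=g_1g_2$ with $k=\deg g_1\le n/2\le \deg g_2=n-k$. I would exploit the ramification reading of the Newton polygon: an edge of $\mathrm{NP}_p(g)$ of slope $-a/e$ in lowest terms and horizontal length $e$ is \emph{indecomposable}, because a root lying over it has $p$-adic valuation $a/e$ and hence generates an extension of $\Q_p$ with ramification index divisible by $e$; any factor in $\Z[x]$ meeting such an edge must therefore contain all $e$ of its roots and have degree at least $e$. Consequently it is enough to produce, for the given $k$, a prime $p$ such that $\mathrm{NP}_p(g)$ carries an edge whose slope has denominator $e>n-k$: such an edge fits in neither $g_1$ nor $g_2$, which is the contradiction sought.

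For primes near the top this is transparent. If $p$ is a prime with $\tfrac{n+1}{2}<p\le n+1$ then $v_p((n+1)!)=1$, so $\mathrm{NP}_p(g)$ descends from $(0,1)$ to $(n,0)$ through a single edge that first reaches height $0$ at some abscissa $m$; as all heights are integers and $v_p(c_j)\ge1$ for $j\le p-2$, this edge has slope $-1/m$ with $m\ge p-1$. Choosing $p\ge n-k+2$ gives $m\ge n-k+1>n-k$, the required contradiction; in particular, whenever $n+1$ is prime no factorization survives, for any $k$. The remaining degrees, arising when $n+1$ is composite, I would treat with a Sylvester--Schur type input: among the $k$ consecutive integers $n-k+2,\dots,n+1$, each exceeding $k$ because $k\le n/2$, there is one divisible by a prime $p>k$, and a closer analysis of $\mathrm{NP}_p(g)$ --- tracking the jump in $v_p\!\big(\tfrac{(n+1)!}{(j+1)!}\big)$ created by that multiple of $p$ together with the pinned endpoint $v_p(c_n)=0$ --- is designed to force an edge incompatible with a factor of degree $k$.

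The main obstacle is precisely this uniform supply of primes. The small degrees, and above all the linear case $k=1$, are the delicate ones: ruling out a rational (equivalently integer) root already requires a suitable prime attached to $n+1$ itself, and it is here that the excluded values enter. When $n+1=2^u$ the interval at the top of $\{1,\dots,n+1\}$ fails to supply a prime of the form required by the refined step, and $n=8$ is the residual sporadic case in which the Sylvester--Schur estimate degenerates; the hypotheses $n\ne 8$ and $n+1\ne 2^u$ are exactly what guarantee the prime-existence needed above. Once every $k$ in $1\le k\le n/2$ has been excluded, $g$ admits no nontrivial factorization, so $g$ --- and therefore $f$ --- is irreducible over $\Q$.
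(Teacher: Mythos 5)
Your overall strategy coincides with the paper's: clear denominators to $g=(n+1)!f$, and for each putative factor degree $k\le n/2$ attach to the block $n-k+2,\dots,n+1$ a prime $p$ whose Newton polygon forbids a factor of degree $k$. Your ``indecomposable edge'' argument via ramification indices is a legitimate substitute for the Dumas-type factorization lemma the paper imports (Lemma \ref{lm1}), and your treatment of the case where an actual prime $p$ lies in the interval $[n-k+2,\,n+1]$ is complete and correct.

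The gap is the remaining case, which is the technical heart of the proof and which you leave as ``a closer analysis \dots designed to force an edge incompatible with a factor of degree $k$.'' Two things go wrong as stated. First, the prime-existence input you invoke is the classical Sylvester--Schur statement, giving a prime $p>k$ dividing one of $n-k+2,\dots,n+1$; the argument actually needs $p\ge k+2$ (Hanson's refinement, the paper's Lemma \ref{key}), and this is exactly where the exceptional pair $(n,k)=(8,2)$, hence the hypothesis $n\ne 8$, enters: $9\cdot 8=2^3\cdot 3^2$ has no prime factor $\ge 4$. Second, the analysis itself is not routine: when $p$ merely divides one of the top $k$ integers but is itself small, $\mathrm{NP}_p(g)$ need not have a single long edge, and what must be shown is that \emph{every} descending slope has denominator $>k$, equivalently that every such slope is smaller than $1/k$ in absolute value. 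The paper gets this from $v_p(c_0)-v_p(c_j)\le v_p((j+1)!)$ together with $v_p((j+1)!)/j\le \frac{p}{(p-1)^2}<\frac1k$ for $p\ge k+2$, while the $n-k+1$ rightmost points are pinned at height $\ge 1$ by $p\mid (n+1)n\cdots(n-k+2)$. For $p=k+1$ the slope bound degenerates to exactly $1/k$ (already $v_p(p!)/(p-1)=1/k$), so an edge of slope $1/k$ and length $k$ can occur and your indecomposability criterion no longer excludes a degree-$k$ factor. Supplying that estimate and the stronger prime-existence theorem closes your argument exactly as in the paper; note also that for $k=1$ the correct choice is any odd prime divisor of $n+1$ (available since $n+1\ne 2^u$), which your sketch gestures at but does not state.
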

In the present paper, we extend this result in a different direction using $\phi$-Newton polygons (defined in the next section as Definition \ref{def1}). Precisely, we prove the following.


\begin{theorem}\label{1.1}
	Let $n \neq 8$ be a positive integer such that $n+1 \neq 2^u$ for any integer $u\geq 2$.  Let $\phi(x)$ belonging to $\Z[x]$ be a monic polynomial which is irreducible modulo all primes less than or equal to $n+1$. Suppose that $a_n, a_0(x), \cdots, a_{n-1}(x)$ belonging to $\Z[x]$ satisfy the following conditions.
	\begin{itemize}
\item[(i)] $\deg a_j(x) < \deg \phi(x)$ for $0\leq j\leq n-1$,
\item[(ii)] the content of $(a_na_0(x))$ is not divisible by any prime less than or equal to $n+1$.
	\end{itemize}
	Then the polynomial $$f(x) = a_n\frac{\phi(x)^n}{(n+1)!}+ \sum\limits_{j=0}^{n-1}a_j(x)\frac{\phi(x)^{j}}{(j+1)!}$$  is irreducible over $\Q$. 
\end{theorem}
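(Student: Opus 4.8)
The plan is to clear denominators and run Schur's own Newton-polygon analysis, but relative to $\phi$ rather than to $x$. Since irreducibility over $\Q$ is unaffected by scaling, I first replace $f$ by $F(x)=(n+1)!\,f(x)$. Because $\frac{(n+1)!}{(j+1)!}=(j+2)(j+3)\cdots(n+1)\in\Z$, we have $F\in\Z[x]$, and writing $c_n=a_n$ and $c_j(x)=\frac{(n+1)!}{(j+1)!}a_j(x)$ for $0\le j\le n-1$ gives $F(x)=\sum_{j=0}^{n}c_j(x)\phi(x)^j$; since $\deg a_j<\deg\phi$ this is exactly the $\phi$-adic expansion of $F$, and $\deg F=n\deg\phi$. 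For a prime $p$, the data defining the $\phi$-Newton polygon are the contents $v_p(c_j)=v_p(a_j(x))+v_p\!\big(\tfrac{(n+1)!}{(j+1)!}\big)=v_p(a_j(x))+v_p((n+1)!)-v_p((j+1)!)$ for $j\le n-1$, together with $v_p(c_n)=v_p(a_n)$. Hypothesis (ii) forces $v_p(a_n)=v_p(a_0(x))=0$ for every prime $p\le n+1$, so the two endpoints of the polygon are pinned at $(0,\,v_p((n+1)!))$ and $(n,0)$, exactly as for Schur's numerical polynomial.

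Next I would record two structural reductions. For any prime $p>n+1$ one has $v_p((n+1)!)=0$, so the polygon is flat and carries no information; hence only primes $p\le n+1$ are relevant, which is precisely the range in which $\phi$ is assumed irreducible modulo $p$ and in which (ii) is assumed. Then, choosing any prime $q\mid n+1$, every product $\frac{(n+1)!}{(j+1)!}=(j+2)\cdots(n+1)$ with $j\le n-1$ contains the multiple $n+1$ of $q$, so $q\mid c_j$, and therefore $\overline F=\overline{a_n}\,\overline\phi^{\,n}$ in $\mathbb F_q[x]$. Since $\mathbb F_q[x]$ is a UFD, $\overline\phi$ is irreducible, and the leading coefficient $a_n$ of $F$ is a $q$-unit, every monic factor of $F$ reduces to a power of $\overline\phi$ with no drop in degree; hence each irreducible factor of $F$ has degree a multiple of $\deg\phi$. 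Thus any factorization $F=G_1\cdots G_t$ has $\deg G_i=k_i\deg\phi$ with $k_i\ge 1$ and $\sum_i k_i=n$, so it suffices to rule out a proper factor of $\phi$-degree $k$ with $1\le k\le n/2$.

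The engine of the argument is then the generalized Eisenstein/Dumas theory of $\phi$-Newton polygons: the polygon of a product is the Minkowski sum of those of the factors, and a side of slope $-h/e$ in lowest terms forces the $\phi$-degree of the corresponding factor to be a multiple of $e$. Because $\phi$ is irreducible modulo each $p\le n+1$, the $\phi$-Newton polygon of $F$ with respect to $p$ carries the same combinatorial data as the ordinary $p$-adic Newton polygon of Schur's numerical polynomial, with the interior points possibly raised (since $v_p(a_j(x))\ge 0$) but the endpoints unchanged. When $n+1$ is prime, taking $p=n+1$ gives the single segment from $(0,1)$ to $(n,0)$ of slope $-1/n$ with $\gcd(1,n)=1$, and the generalized Eisenstein--Sch\"onemann criterion yields irreducibility at once. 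For composite $n+1$ I would, exactly as Schur does, produce for each admissible $k$ a prime $p\le n+1$ whose polygon has a controlling side whose slope has denominator not dividing $k$, contradicting the existence of a factor of $\phi$-degree $k$; the requisite primes are furnished by a Bertrand/Sylvester-type estimate on the prime divisors of the products $(j+2)\cdots(n+1)$.

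The main obstacle is precisely this last, number-theoretic, step, which is the genuine difficulty already present in Schur's original theorem: guaranteeing a prime $p\le n+1$ whose $\phi$-Newton polygon obstructs every proper $\phi$-degree $k$. This is exactly where the hypotheses $n\ne 8$ and $n+1\ne 2^u$ enter, since these are the only values for which the needed prime fails to exist and the polygon degenerates; outside them the combined constraints over the available primes are satisfiable only by the trivial partition $n=n$, forcing $t=1$ and hence the irreducibility of $F$, and therefore of $f$, over $\Q$.
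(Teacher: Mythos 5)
Your overall strategy coincides with the paper's: clear denominators to get $F=(n+1)!f$, use reduction modulo a prime dividing $n+1$ to force every factor to have degree a multiple of $\deg\phi$ (this is the paper's Lemma \ref{1.3}, and your version of it is correct), and then use $\phi$-Newton polygons with respect to suitable primes $p\le n+1$ to exclude a factor of $\phi$-degree $k$ for each $1\le k\le n/2$. Your bookkeeping of the endpoints $(0,v_p((n+1)!))$ and $(n,0)$ is right, and the case where $n+1$ is prime is correctly dispatched by the Eisenstein--Dumas argument on the single segment of slope $1/n$.

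The gap is that the heart of the proof is announced rather than carried out, and the mechanism you propose for it is not the one that works. First, the criterion ``a controlling side whose slope has denominator not dividing $k$'' is insufficient: a factor's polygon is assembled from translates of lattice subsegments of \emph{several} edges plus a horizontal piece, so $k$ could be realized as a sum of contributions even when no single denominator divides $k$. What is actually needed (the paper's Theorem \ref{1.2}) is the quantitative estimate that for a prime $p\ge k+2$ dividing $(n+1)n\cdots(n-k+2)$, every nonzero slope of the polygon of $F$ is strictly less than $1/k$ in absolute value --- this follows from $v_p^x(b_0a_0(x))-v_p^x(b_ja_j(x))\le v_p((j+1)!)<\frac{j+1}{p-1}$ and $\frac{p}{(p-1)^2}<\frac1k$ --- so that consecutive lattice points on any non-horizontal edge are more than $k$ apart horizontally, while the horizontal part has length at most $k-1$; a factor of $\phi$-degree $k$ then cannot be assembled via the Ore--Montes product lemma. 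Second, the existence of such a prime for every $k\in[2,n/2]$ is not a ``Bertrand-type'' fact one can wave at: it is Hanson's refinement of Sylvester--Schur (Lemma \ref{key}, from Faulkner's paper), namely that a prime $p\ge k+2$ divides $(n+1)n\cdots(n-k+2)$ with the sole exception $(n,k)=(8,2)$; Bertrand's postulate does not deliver the divisibility condition. You correctly identify that $n\ne 8$ and $n+1\ne 2^u$ are exactly the exceptional cases, but without stating and invoking these two ingredients precisely, the argument for composite $n+1$ --- which is the entire content of the theorem beyond the prime case --- is missing.
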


It may be pointed out that in the above theorem the assumptions ``$n+1 \neq 2^u$ for any integer $u\geq 2$" and ``the content of $a_0(x)$ is not divisible by any prime less than or equal to $n+1$" cannot be dispensed with. For example, consider the polynomial $\phi(x)=x^3-x+7$ which is irreducible modulo $2$, $3$ and $5$. Then for $n = 3$, the polynomial $f(x) = \frac{\phi(x)^3}{4!}+\frac{\phi(x)^2}{3!}-1$ = $\frac{1}{4!}(\phi(x)-2)(\phi(x)^2+6\phi(x)+12)$ is reducible over $\Q$.  Similarly, the polynomial $f(x)=\frac{\phi(x)^4}{5!}+12\frac{\phi(x)^2}{3!}+120=\frac{1}{5!}(\phi(x)^2+120)^2$ is reducible over $\Q.$ 

We also give below an example to show that Theorem $\ref{1.1}$ may not hold if $a_n$ is replaced by a (monic) polynomial $a_n(x)$ with integer coefficient having degree less than $\deg\phi(x)$. Consider $\phi(x) = x^3-x+7$ which is irreducible modulo $2$, $3$ and $5$. Take $a_4(x) = x+62, a_3(x) = x-2, a_2(x) = x+3 = a_1(x)$ and $a_0(x) = x+1$. Then the polynomial $$a_4(x)\frac{\phi(x)^4}{5!} + a_3(x)\frac{\phi(x)^3}{4!} + a_2(x)\frac{\phi(x)^2}{3!} + a_1(x)\frac{\phi(x)}{2!} + a_0(x)$$ has $-2$ as a root.

\vspace{0.2cm}

We also prove the following theorem which is of independent interest as well. 
It will be used in the proof of Theorem \ref{1.1}.
\begin{theorem}\label{1.2}
	Let $\phi(x)\in \Z[x]$ be a monic polynomial which is irreducible modulo all primes less than or equal to $n+1$ and $a_0(x), a_1(x), \cdots, a_n(x)$ belonging to $\Z[x]$ be polynomials each having degree less than $\deg \phi(x)$. Let the content of $a_0(x)$ is not divisible by any prime less than or equal to $n+1$. Let $$f(x) = \sum\limits_{j=0}^{n}a_j(x)\frac{\phi(x)^{j}}{(j+1)!}.$$ Let $k$ be a positive integer $\leq \frac{n}{2}$. Suppose that there exists a prime $p \geq k+2$ such that $$p| (n+1)n(n-1)\cdots (n-k+2)$$ and $p$ does not divide the leading coefficient of $a_n(x)$. Then $f(x)$ can not have a factor in $\Z[x]$ with degree lying in the interval $[k\deg\phi(x), (k+1)\deg\phi(x)).$
\end{theorem}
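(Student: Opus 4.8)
The plan is to study the $\phi$-Newton polygon (Definition \ref{def1}) of the rescaled polynomial $F(x)=(n+1)!\,f(x)\in\Z[x]$ with respect to the given prime $p$, and to read off a combinatorial obstruction to a factor of $\phi$-length $k$. Write $d=\deg\phi$ and $V=v_p\big((n+1)!\big)$, where $v_p$ denotes the $p$-adic valuation, extended to $\Z[x]$ via contents. The $\phi$-expansion of $F$ is $F=\sum_{j=0}^{n}c_j\,a_j(x)\,\phi(x)^{j}$ with $c_j=(n+1)!/(j+1)!\in\Z$, so the vertices of the polygon lie among the points $P_j=(j,e_j)$, where $e_j=v_p(c_j)+v_p(a_j(x))=V-v_p\big((j+1)!\big)+v_p(a_j(x))$. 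Since $p\le n+1$ we have $V\ge 1$; since the content of $a_0(x)$ is prime to $p$ and $p$ does not divide the leading coefficient of $a_n(x)$, we get $e_0=V$ and $e_n=0$. A $\Q$-factor of $f$ of degree $D$ with $kd\le D<(k+1)d$ has $\phi$-Newton polygon of horizontal length $\lfloor D/d\rfloor=k$; by the product (Minkowski-sum) theorem for $\phi$-Newton polygons, applied to the monic factorization of $F$ over $\Q_p$, this polygon is a summand of that of $F$, and its horizontal length is a sum of contributions, one per edge of $N(F)$, the contribution from an edge of slope $-h/e$ (in lowest terms) being a nonnegative multiple of $e$ bounded by that edge's length. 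It therefore suffices to show that no such combination equals $k$.

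The first step is a slope bound. By Legendre's formula $v_p\big((j+1)!\big)=\big((j+1)-s_p(j+1)\big)/(p-1)\le j/(p-1)$, where $s_p$ is the base-$p$ digit sum; hence for every $j\ge 1$ the slope from $(0,V)$ to $P_j$ is $(e_j-V)/j=\big(v_p(a_j)-v_p((j+1)!)\big)/j\ge -1/(p-1)$. Thus the first (steepest) edge of $N(F)$, and therefore every edge, has slope in $[-1/(p-1),0]$, the upper bound holding because all $e_j\ge e_n=0$. Consequently any edge of strictly negative slope $-h/e$ with $\gcd(h,e)=1$ satisfies $h/e\le 1/(p-1)$, that is $e\ge h(p-1)\ge p-1\ge k+1>k$, using $p\ge k+2$.

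The second step bounds the slope-$0$ edge. Because $p\ge k+2$ exceeds the span $k-1$ of the interval $[n-k+2,\,n+1]$, the prime $p$ divides exactly one of the $k$ consecutive integers $n-k+2,\dots,n+1$, say $m=n+1-t$ with $0\le t\le k-1$; moreover $m$ is the largest multiple of $p$ not exceeding $n+1$, since $m+p>n+1$. Hence $v_p(c_j)=v_p\big((j+2)\cdots(n+1)\big)=0$ precisely when $j\ge n-t$, so every point of height $0$ occurs at a position $j\in\{n-t,\dots,n\}$, and the slope-$0$ edge of $N(F)$ has horizontal length at most $t\le k-1$.

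Combining the two bounds, every edge of $N(F)$ is either the slope-$0$ edge, of length at most $k-1$, or an edge of negative slope whose denominator exceeds $k$. A horizontal length equal to $k$ can then never be assembled: using any negative-slope edge already forces a contribution $\ge k+1$, while the slope-$0$ edge alone contributes at most $k-1$. Hence $N(F)$ admits no Minkowski summand of horizontal length $k$, so $f$ has no factor of degree in $[kd,(k+1)d)$. I expect the main obstacle to be the careful invocation of the product theorem together with the bookkeeping for non-monic factors (passing to the monic factorization over $\Q_p$ and matching horizontal lengths to degrees $\lfloor D/d\rfloor$); the number-theoretic heart is the slope-$0$ length bound, which is exactly where the hypotheses $p\ge k+2$ and $p\mid(n+1)n(n-1)\cdots(n-k+2)$ are used.
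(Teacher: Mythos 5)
Your proposal is correct and follows essentially the same route as the paper's proof: pass to $F(x)=(n+1)!\,f(x)$, show that every non-horizontal edge of its $\phi$-Newton polygon with respect to $p$ has lattice points horizontally separated by more than $k$ (via the bound $v_p((j+1)!)\le j/(p-1)$ together with $p\ge k+2$), show that the horizontal part has length at most $k-1$ (via $p\mid(n+1)n\cdots(n-k+2)$), and conclude from the product lemma that a factor of $\phi$-length $k$ is impossible. The only differences are cosmetic: a mirror-image indexing convention, the Legendre-formula slope bound in place of the paper's $p/(p-1)^2<1/k$, and a slightly sharper localization of the horizontal edge.
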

As an application, we provide a class of examples of Theorem \ref{1.1}. 

\begin{example}
	Consider $\phi(x) = x^4-x-1$. It can be easily checked that $\phi(x)$ is irreducible modulo $2,3$ and $5$. Let $j\geq 2$ and $a_j$ be integers. Let $a_{i}(x) \in \Z[x]$ be polynomials each having degree less than $4$ for $0\leq i\leq j-1$. Assume that the content of $(a_ja_0(x))$ is not divisible by any prime less than or equal to $j+1$. Then by Theorem \ref{1.1}, the polynomial	$$f_j(x) = \frac{a_{j}}{(j+1)!}\phi(x)^{j}+\sum\limits_{i=0}^{j-1}a_i(x)\frac{\phi(x)^{i}}{(i+1)!}$$  is irreducible over $\Q$ for $4\leq j\leq 6$.
\end{example}
It may be pointed out that, in the above example, the irreducibility of the polynomial $f_5(x)$ does not seem to follow from any known irreducibility criterion (cf. \cite{brown2008}, \cite{JakBLMS}, \cite{JakAMS}, \cite{JakJA}, \cite{JakAM}, \cite{Ja-Sa} and \cite{Jho-Kh}).
\section{Proof of Theorem \ref{1.2}}
Before proving Theorem \ref{1.2}, we shall first introduce the notion of Gauss valuation and $\phi$-Newton polygon. For a prime $p$, $v_p$ will denote the $p$-adic valuation of $\Q$ defined for any non-zero integer $b$ to be the highest power of $p$ dividing $b$. 
We shall denote by $v_p^x$ the Gaussian valuation extending $v_p$ defined on the polynomial ring $\Z[x]$ by $$v_p^x(\sum\limits_{i}b_ix^i) = \min_{i}\{v_p(b_i)\}, b_i \in \Z.$$ We now define the notion of $\phi$-Newton polygon.
\begin{definition}\label{def1}
	Let $p$ be a prime number and $\phi(x) \in \Z[x]$ be a monic polynomial which is irreducible modulo $p$. Let $f(x)$ belonging to $\Z[x]$ be a polynomial having $\phi$-expansion\footnote{If $\phi(x)$ is a fixed monic polynomial with coefficients in $\Z$, then any $f(x)\in \Z[x]$ can be uniquely written as a finite sum $\sum\limits_{i}b_i(x)\phi(x)^i$ with $\deg b_i(x)< \deg \phi(x)$ for each $i$; this expansion will be referred to as the $\phi$-expansion of $f(x)$.} $\sum\limits_{i=0}^{n}b_i(x) \phi(x)^i$ with $b_0(x)b_n(x) \neq 0$. Let $P_i$ stand for the point in the plane having coordinates $(i, v_p^x(b_{n-i}(x)))$ when $b_{n-i}(x)\neq 0$, $0\leq i \leq n$.
Let $\mu_{ij}$ denote the slope of the line joining the points $P_i$ and $P_j$ if $b_{n-i}(x)b_{n-j}(x)\ne 0$. Let $i_1$ be the largest index $0< i_1 \leq n$ such that 
\begin{align*}
	\mu_{0i_1}=\min \{\mu_{0j}\ |\ 0<j \leq n,\ b_{n-j}(x)\ne 0\}.
\end{align*}
If $i_1<n$, let $i_2$ be the largest index $i_1< i_2 \leq n$ such that 
\begin{align*}
	\mu_{i_1i_2}=\min \{\mu_{i_1j}\ |\ i_1<j \leq n,\ b_{n-j}(x)\ne 0\}
\end{align*}
and so on. The $\phi$-Newton polygon of $f(x)$ with respect to $p$ is the polygonal path having segments $P_0P_{i_1}, P_{i_1}P_{i_2}, \dots, P_{i_{k-1}}P_{i_k}$ with $i_k=n$. These segments are called the edges of the $\phi$-Newton polygon of $f(x)$ and their slopes from left to right form a strictly increasing sequence. The $\phi$-Newton polygon minus the horizontal part (if any) is called its principal part.
\end{definition}

We shall use the following lemma\cite[Lemma 2.4]{Ji-Kh} in the proof of Theorem \ref{1.2}.
\begin{lemma}\label{lm1} Let $\phi(x)$ belonging to $\Z[x]$ be a monic polynomial which is irreducible modulo a given prime $p$. Let $g(x), h(x)$ belonging to $\Z[x]$ be polynomials not divisible by $\phi(x)$ having leading coefficients coprime with $p$. Then the following hold.
\begin{itemize}
	\item [(i)] The principal part of the $\phi$-Newton polygon of $g(x)h(x)$ with respect to $p$ is obtained by constructing a polygonal path beginning with a point on the non-negative side of $x$-axis and using translates of edges of the principal part in the $\phi$-Newton polygons of $g(x), h(x)$ in the increasing order of slopes.
	\item [(ii)] The length of the edge of the $\phi$-Newton polygon of $g(x)h(x)$ with respect to $p$ having slope zero is either the sum of the lengths of the edges of the $\phi$-Newton polygons of $g(x)$, $h(x)$ with respect to $p$ having slope zero or it exceeds this sum by one.
\end{itemize}
\end{lemma}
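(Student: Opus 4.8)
The plan is to clear denominators and extract a contradiction from a single $\phi$-Newton polygon. Put $m=\deg\phi(x)$ and
$F(x)=(n+1)!\,f(x)=\sum_{j=0}^{n}a_j(x)\tfrac{(n+1)!}{(j+1)!}\phi(x)^{j}\in\Z[x]$.
A factor of $f$ whose degree lies in $[km,(k+1)m)$ is precisely a factor whose $\phi$-expansion has $\phi$-degree $k$, so after clearing denominators and a Gauss-lemma normalisation it suffices to preclude a factorisation $F=g\,\widetilde h$ in $\Z[x]$ with $\deg_\phi g=k$. I would first record the $\phi$-Newton polygon of $F$ with respect to $p$: with $C=v_p((n+1)!)$ the point $P_i$ has height $y_i=v_p^x(a_{n-i})+\big(C-v_p((n-i+1)!)\big)$, and the two coprimality hypotheses pin down the endpoints, namely $y_0=v_p^x(a_n)=0$ (since $p\nmid$ the leading coefficient of $a_n(x)$) and $y_n=v_p^x(a_0)+C=C$ (since the content of $a_0(x)$ is prime to $p$).

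The core of the argument is a slope bound. Using $v_p(M!)\le (M-1)/(p-1)$, for each $i<n$ the chord $P_iP_n$ has slope $\dfrac{C-y_i}{n-i}=\dfrac{v_p((n-i+1)!)-v_p^x(a_{n-i})}{n-i}\le\dfrac{v_p((n-i+1)!)}{n-i}\le\dfrac{1}{p-1}$. Since the $\phi$-Newton polygon is the lower convex hull ending at $P_n$, the slope of its last edge is one of these chord slopes, and by convexity every edge has slope at most $1/(p-1)$. Hence an edge of the principal part has slope $a/b$ in lowest terms with $a\ge 1$ and $a/b\le 1/(p-1)$, which forces $b\ge(p-1)a\ge p-1\ge k+1>k$; this is exactly where the hypothesis $p\ge k+2$ is used decisively.

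Next I would bound the slope-zero (horizontal) part. A vertex $P_i$ sits at height $0$ only if $v_p\big(\tfrac{(n+1)!}{(n-i+1)!}\big)=0$, i.e. the block $\{n-i+2,\dots,n+1\}$ contains no multiple of $p$. Because $p\mid (n+1)n\cdots(n-k+2)$ while $k<p$ (as $p\ge k+2$), the $k$ consecutive integers $n-k+2,\dots,n+1$ contain exactly one multiple $s_0$ of $p$; then $s_0$ is the largest multiple of $p$ not exceeding $n+1$ and $s_0\ge n-k+2$. Consequently $y_i=0$ forces $n-i+2>s_0\ge n-k+2$, i.e. $i<k$, so the horizontal part has length at most $k-1<k$.

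Finally I would invoke Lemma \ref{lm1}. Since $p\nmid a_n$, and $\phi\nmid F$ (its constant $\phi$-coefficient $a_0(x)(n+1)!$ is nonzero of degree $<\deg\phi$), both $g$ and $\widetilde h$ have $\phi$-leading coefficient prime to $p$ and are not divisible by $\phi$, so Lemma \ref{lm1} applies: the principal $\phi$-Newton polygon of $F$ is the slope-ordered concatenation of those of $g$ and $\widetilde h$, and the lengths of the slope-zero parts add up to within one. Thus every slope occurring in the principal part of $g$ already occurs in that of $F$, so each nonzero principal edge of $g$ has horizontal length divisible by some $b>k$ and hence exceeding $k$, while the slope-zero part of $g$ has length at most that of $F$, which is $<k$. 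Since $\deg_\phi g$ equals the total horizontal length of $g$'s polygon, it cannot equal $k$, the desired contradiction. I expect the main obstacle to be the slope bound together with the passage from ``every principal slope has denominator $>k$'' to ``$g$ cannot collect exactly $k$ units of horizontal length''; the latter rests on the Eisenstein--Dumas--Ore type divisibility that Lemma \ref{lm1} encodes, which must be applied with care to the possibly several edges that $g$ inherits.
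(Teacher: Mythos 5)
Your proposal does not prove the statement in question. The statement is Lemma \ref{lm1} itself --- the Dumas--Ore-type multiplicativity of $\phi$-Newton polygons under products --- but what you have written is a proof of Theorem \ref{1.2}, and at the decisive moment (``Finally I would invoke Lemma \ref{lm1}'') you assume exactly the statement you were asked to prove. As a proof of Lemma \ref{lm1} the proposal is therefore circular: nothing in it addresses why the principal part of the polygon of $g(x)h(x)$ is assembled from translates of the principal edges of the polygons of $g(x)$ and $h(x)$ arranged by increasing slope, nor why the slope-zero edge of the product can exceed the sum of the factors' slope-zero edges by exactly one. Note that the paper does not prove the lemma either; it imports it verbatim from Jindal--Khanduja \cite[Lemma 2.4]{Ji-Kh}. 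An actual proof needs machinery entirely absent from your sketch: one compares the $\phi$-expansion of $g(x)h(x)$ with the formal product of the $\phi$-expansions, controlling the ``carries'' that arise because a coefficient product $g_i(x)h_j(x)$ may have degree $\geq \deg\phi(x)$ and must be rewritten as $q(x)\phi(x)+r(x)$; Gauss' lemma for the valuation $v_p^x$ gives $v_p^x(g_ih_j)=v_p^x(g_i)+v_p^x(h_j)$, and the irreducibility of $\phi$ modulo $p$ (together with the leading coefficients being prime to $p$) is what prevents the minimal-valuation terms from cancelling in the residue ring $\F_p[x]/(\bar\phi(x))$. The possible excess of one in part (ii) comes precisely from such a carry at the top end of the expansion --- a phenomenon your argument never has to confront because it only ever consumes the lemma's conclusion.

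For what it is worth, read as a blind proof of Theorem \ref{1.2} your argument is essentially sound and closely parallels the paper's own: your endpoint computation $y_0=0$, $y_n=C$ matches; your slope bound via $v_p(M!)\leq (M-1)/(p-1)$, giving every principal edge slope at most $1/(p-1)<1/k$, is a mild variant of the paper's bound $v_p((j+1)!)<(j+1)/(p-1)$ leading to slope $< p/(p-1)^2<1/k$; both yield the same key consequence, that integer points on a positive-slope edge are horizontally separated by more than $k$; and your count bounding the horizontal part by $k-1$ is the paper's observation that slope-zero endpoints lie among the points with $j\in\{0,1,\dots,k-1\}$. So the correct resolution is not to repair your text but to redirect it: the content you were asked to supply is the product decomposition of $\phi$-Newton polygons itself, for which you must either reproduce the argument of \cite{Ji-Kh} or give an independent one (for instance by lifting to a henselian valued field in which $\bar\phi$ determines an unramified extension and using additivity of the induced valuation over the factorization), neither of which your proposal attempts.
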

\begin{proof}[Proof of Theorem \ref{1.2}]
	To prove that $f(x)$ can not have a factor in $\Z[x]$ with degree lying in the interval $[k\deg\phi(x), (k+1)\deg\phi(x))$, it suffices to show that $F(x) = (n+1)!f(x)$ can not have a factor in $\Z[x]$ with degree lying in the interval $[k\deg\phi(x), (k+1)\deg\phi(x))$. Let $b_j$ denote the integer such that $b_j = \frac{(n+1)!}{(j+1)!}$ for $0\leq j \leq n$.
	Then $$F(x) = (n+1)!f(x) = \sum\limits_{j=0}^{n}b_ja_j(x)\phi(x)^{j}.$$
	Note that the condition $p|(n+1)n(n-1)\cdots (n-k+2)$ implies that $p|b_j$ for $j \in \{0, 1, \cdots, n-k\}$. Thus, the $n-k+1$ rightmost points,
	$$(k, v_{p}^{x}(b_{n-k}a_{n-k}(x)), \cdots, (n-1, v_p^x(b_1a_{1}(x)), (n, v_p^x(b_0a_{0}(x)),$$
	associated with the $\phi$-Newton polygon of $F(x)$ with respect to $p$ have $y$-ordinates $\geq 1$. Note that the leftmost endpoint is given by $(0, v_p^x(a_n(x))$. Since the leading coefficient of $a_n(x)$ is not divisible by $p$, we have $v_p^x(a_n(x))=0$. Hence the leftmost endpoint of the $\phi$-Newton polygon of $F(x)$ with respect to $p$ will  be $(0,0)$. 
	By definition of $\phi$-Newton polygon of $F(x)$, we know that the slopes of the edges increase from left to right, hence the points $(j, v_p^x(b_{n-j}a_{n-j}(x))$ for $j\in \{k-1, k, k+1, \cdots, n\}$ all lie on or above edges of the Newton polygon which have positive slope. We claim that each of these positive slopes is strictly less than $\frac{1}{k}.$ Observe that the slope of the rightmost edge is given by
	$$\max\limits_{1\leq j\leq n}\bigg\{\frac{v_p^x(b_0a_0(x))-v_p^x(b_ja_j(x))}{j}\bigg\}.$$
	For $1\leq j\leq n$, we have
	\begin{align*}
		v_p^x(b_0a_0(x))-v_p^x(b_ja_j(x)) &= v_p^x((n+1)!a_0(x))-v_p^x\bigg(\frac{(n+1)!}{(j+1)!}a_j(x)\bigg)\\ & \leq v_p((n+1)!) - v_p\bigg(\frac{(n+1)!}{(j+1)!}\bigg) = v_p((j+1)!).
	\end{align*}
	We consider two cases to estimate $v_p((j+1)!)/j.$
	
	Case (i): Suppose that $j < p-1$. Since $p$ is prime number and $j+1 < p$, we have $p\nmid (j+1)!$. Therefore, $v_p((j+1)!) = 0$. Hence for $j<p-1$,
	$$\frac{v_p((j+1)!)}{j} = 0.$$
	
	Case(ii): Suppose that $j \geq p-1$. Since $\frac{1}{j}<\frac{1}{p-1}$ and using the fact that $v_p((j+1)!) < \frac{j+1}{p-1}$, we see that
	$$\frac{v_p((j+1)!)}{j} < \frac{1}{p-1} + \frac{1}{j(p-1)} \leq \frac{1}{p-1}+\frac{1}{(p-1)^2} = \frac{p}{(p-1)^2}.$$
	
	As $p \geq k+2$, we see that $\frac{p}{(p-1)^2} < \frac{1}{k}.$ By combining Cases (i) and (ii), we deduce that
		$$\max\limits_{1\leq j\leq n}\bigg\{\frac{v_p^x(b_0a_0(x))-v_p^x(b_ja_j(x))}{j}\bigg\} \leq \max\limits_{1\leq j\leq n}\bigg\{\frac{v_p((j+1)!)}{j}\bigg\} < \frac{p}{(p-1)^2} < \frac{1}{k}.$$
		Thus the slope of the rightmost edge is less than $\frac{1}{k}$. Since the slopes of the edges of the $\phi$-Newton polygon increase from left to right, the slope of each edge of the $\phi$-Newton polygon of $F(x)$ with respect to $p$ is less than $\frac{1}{k}.$ This proves our claim.
		
		Now suppose to the contrary that $F(x)$ has a factor with degree lying in the interval $[k\deg\phi(x), (k+1)\deg\phi(x))$. Then there exist polynomials $g(x), h(x) \in \Z[x]$ with $F(x) = g(x)h(x)$ and 
		\begin{equation}\label{eq:1.1}
			k\deg\phi(x) \leq \deg h(x) < (k+1)\deg \phi(x).
		\end{equation}
		Let $\sum\limits_{i=0}^{t}h_i(x)\phi(x)^i$ be the $\phi$-expansion of $h(x)$ with $h_t(x)\neq 0$. Note that $t$ is the sum of the lengths of horizontal projection of all the edges of the $\phi$-Newton polygon of $h(x)$ with respect to $p$. We have
		\begin{equation}\label{eq:1.2}
			t\deg\phi(x) \leq \deg h(x) = \deg h_t(x) + t\deg \phi(x) \leq (t+1)\deg\phi(x)-1.
		\end{equation}
		This implies that 
		$$t+1 \geq \frac{\deg h(x) +1}{\deg\phi(x)} \geq \frac{k\deg\phi(x) +1}{\deg\phi(x)} = k+\frac{1}{\deg\phi(x)},$$
		which gives 
		\begin{equation}
			t\geq k-1+\frac{1}{\deg\phi(x)} > k-1.
		\end{equation}
		We now consider an edge of the $\phi$-Newton polygon of $F(x)$ with respect to $p$ which has positive slope. Let $(a,b)$ and $(c,d)$ be two points with integer entries on such an edge. Then slope of the line joining these points is the slope of that edge, so that
		$$\frac{1}{|c-a|} \leq \frac{|d-b|}{|c-a|} < \frac{1}{k}.$$
		Thus $|c-a| > k$. Therefore any two such points on an edge with positive slope of the $\phi$-Newton polygon of $F(x)$ have their $x$-coordinates separated by a distance strictly greater than $k$. In view of Equations (\ref{eq:1.1}) and (\ref{eq:1.2}), we have
		$$t\leq \frac{\deg h(x)}{\deg \phi(x)} < k+1.$$
		The above inequality implies that $t$, which is the sum of the lengths of horizontal projections of all the edges of the $\phi$-Newton polygon of $h(x)$, is strictly less than $k+1$. In view of Lemma \ref{lm1}, the translates of the edges of the $\phi$-Newton polygon of $h(x)$ cannot be found within those edges of the $\phi$-Newton polygon of $F(x)$ which have positive slope. So the translates of the edges of the $\phi$-Newton polygon of $h(x)$ (with respect to $p$) must be among the edges of the $\phi$-Newton polygon of $F(x)$ (with respect to $p$) having $0$ slope. On the other hand, the endpoints of the edges of the $\phi$-Newton polygon of $F(x)$ having $0$ slope must be among the points $(j, v_p^x(b_{n-j}a_{n-j}(x))$ for $j\in \{0,1, \cdots, k-1\}.$ Since $k-1  < t$, these edges by themselves cannot consist of a complete collection of translated edges of the $\phi$-Newton polygon of $h(x)$ in view of Lemma \ref{lm1}. So, we have a contradiction. Thus $F(x)$ can not have a factor in $\Z[x]$ with degree lying in the interval $[k\deg\phi(x), (k+1)\deg\phi(x)).$		
	\end{proof}

	\section{Proof of Theorem \ref{1.1}.}

	We first prove the following lemma, which will be used in the proof of Theorem \ref{1.1}.

\begin{lemma}\label{1.3}
	Let $n\geq 1$ be an integer. Let $\phi(x)\in \Z[x]$ be a monic polynomial which is irreducible modulo a fixed prime $p$ dividing $n+1$. Suppose that $a_0(x), a_1(x), \cdots, a_{n-1}(x)$ belonging to $\Z[x]$ are polynomials each having degree less than $\deg \phi(x)$. Let $a_n$ be an integer with $p\nmid a_n$. Then the polynomial $f(x) = a_n \frac{\phi(x)^n}{(n+1)!} + \sum\limits_{j=0}^{n-1}a_j(x)\frac{\phi(x)^{j}}{(j+1)!}$ can not have any non-constant factor having degree less than $\deg \phi(x)$.
\end{lemma}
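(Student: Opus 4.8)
The plan is to clear denominators, reduce modulo $p$, and exploit that $\phi(x)$ is irreducible modulo $p$ so that the reduction of the integral form of $f$ becomes, up to a nonzero constant, a pure power of an irreducible polynomial — and such a power has no factor whose degree lies strictly between $1$ and $\deg\phi(x)$. No appeal to the $\phi$-Newton polygon is needed in this small-degree range; the whole statement follows from a degree count after reduction.

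First I would replace $f(x)$ by the integral polynomial $F(x)=(n+1)!\,f(x)=a_n\phi(x)^n+\sum_{j=0}^{n-1}b_ja_j(x)\phi(x)^j$, where $b_j=\frac{(n+1)!}{(j+1)!}$. Since $f$ and $F$ differ by the nonzero rational constant $(n+1)!$, it is enough to show that $F$ has no non-constant factor of degree less than $\deg\phi(x)$, and by Gauss's Lemma such a factor over $\Q$ may be taken primitive in $\Z[x]$. Thus I would assume, for contradiction, that $F=gh$ with $g,h\in\Z[x]$, $g$ primitive, and $1\le\deg g<\deg\phi(x)$.

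The engine of the argument is the divisibility $p\mid b_j$ for every $0\le j\le n-1$, which holds because $b_j=(n+1)n\cdots(j+2)$ always contains the factor $n+1$ and $p\mid n+1$. Reducing modulo $p$ therefore annihilates every term except the top one, giving $\overline{F}(x)=\overline{a_n}\,\overline{\phi}(x)^{\,n}$ in $\mathbb{F}_p[x]$. Since $p\nmid a_n$, the leading coefficient of $F$ survives the reduction, so $\deg\overline{F}=\deg F=n\deg\phi(x)$.

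Finally I would compare degrees in $\overline{F}=\overline{g}\,\overline{h}$. Reduction cannot raise degree, so from $\deg\overline{g}+\deg\overline{h}=\deg\overline{F}=\deg F=\deg g+\deg h$ together with $\deg\overline{g}\le\deg g$ and $\deg\overline{h}\le\deg h$, equality is forced throughout; in particular $\deg\overline{g}=\deg g$. Now $\overline{g}$ divides $\overline{a_n}\,\overline{\phi}^{\,n}$ in the unique factorization domain $\mathbb{F}_p[x]$, and $\overline{\phi}$ is irreducible there, so $\overline{g}=c\,\overline{\phi}^{\,m}$ for some constant $c$ and $0\le m\le n$; hence $\deg g=\deg\overline{g}=m\deg\phi(x)$. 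But no integer $m\ge 0$ makes $m\deg\phi(x)$ lie in the range $1\le\deg g<\deg\phi(x)$, and this contradiction proves the lemma. The only steps demanding care are the descent to $\Z[x]$ via Gauss's Lemma and, more essentially, the degree-equality step: it is precisely the hypothesis $p\nmid a_n$ that guarantees $\overline{F}$ retains its full degree $n\deg\phi(x)$ and is a genuine power of the irreducible $\overline{\phi}$, and without it the power-of-$\overline{\phi}$ structure would degenerate and the argument would collapse.
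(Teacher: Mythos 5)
Your proposal is correct and follows essentially the same route as the paper: clear denominators to get $F=(n+1)!f$, note $p$ kills every coefficient $b_j$ for $j<n$ so that $\overline{F}=\overline{a_n}\,\overline{\phi}^{\,n}$ in $\mathbb{F}_p[x]$, and derive a contradiction from the irreducibility of $\overline{\phi}$ modulo $p$. The only cosmetic difference is that the paper justifies degree preservation under reduction via the leading coefficients of the factors being coprime to $p$, whereas you use a degree-additivity squeeze; both are fine.
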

\begin{proof}
	It is enough to show that $F(x) = (n+1)!f(x) = a_n\phi(x)^n + \sum\limits_{j=0}^{n-1}\frac{(n+1)!}{(j+1)!} a_j(x)\phi(x)^j$ does not have a non-constant factor over $\Z$ with degree less than $\deg\phi(x)$. Since the leading coefficient of $F(x)$ is $a_n$, it is not divisible by $p$. Let $c$ denote the content of $F(x)$. As $p\nmid a_n$, we have $p\nmid c$. Now suppose to the contrary that there exists a primitive non-constant polynomial $h(x) \in \Z[x]$ dividing $F(x)$ having degree less than $\deg \phi(x)$. Then in view of Gauss Lemma, there exists $g(x)\in \Z[x]$ such that $\frac{F(x)}{c} = h(x)g(x)$. The leading coefficient of $F(x)$ and hence those of $h(x)$ and $g(x)$ are coprime with $p$. Note that $p$ divides $\frac{(n+1)!}{(j+1)!}$ for $0\leq j\leq n-1$. Therefore on passing to $\Z/p\Z$, we see that the degree of $\bar{h}(x)$ is same as that of $h(x)$. Hence $\deg\bar{h}(x)$ is positive and less than $\deg\phi(x)$. This is impossible because $\bar{h}(x)$ is a divisor of $\frac{\overline{F}(x)}{\bar{c}} = \frac{\bar{a}_n}{\bar{c}}\bar{\phi}(x)^n$ and $\bar{\phi}(x)$ is irreducible over $\Z/p\Z$. This completes the proof of the lemma.
	\end{proof}
	The following result is due to Hanson\cite{Fau}. It will be used in the sequel. We omit its proof.
\begin{lemma}\label{key}
	Let $n$ be an integer $\geq 4$, and let $k$ be an integer in $[2,\frac{n}{2}]$. Then there always exists a prime $p \geq k+2$ such that 
	$p|(n+1)n(n-1)\cdots (n-k+2)$ with the exceptions $(n,k) \in \{(8,2)\}.$
\end{lemma}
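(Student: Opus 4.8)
The plan is to convert the divisibility assertion about the falling factorial into one about a single binomial coefficient, and then to run a Sylvester--Erd\H{o}s style smooth-number argument. First I would record the identity
$$(n+1)n(n-1)\cdots(n-k+2)=\frac{(n+1)!}{(n-k+1)!}=k!\binom{n+1}{k}.$$
Since any prime $p\geq k+2$ satisfies $p>k$, it cannot divide $k!$; hence such a $p$ divides the left-hand product if and only if it divides $\binom{n+1}{k}$. Thus it suffices to prove that, under the hypotheses $n\geq 4$ and $2\leq k\leq n/2$ (which give $n+1\geq 2k+1$), the integer $N:=\binom{n+1}{k}$ admits a prime divisor $\geq k+2$, the sole exception being $(n,k)=(8,2)$.

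The main body of the argument would proceed by contradiction: assume every prime dividing $N$ is $\leq k+1$, and bound $N$ from above. For each prime $p$ one has the standard estimate $p^{v_p(N)}\leq n+1$, which follows from Legendre's formula (equivalently Kummer's theorem), since a base-$p$ carry can occur only at positions $p^i\leq n+1$. I would then split the primes by size: those with $p>\sqrt{n+1}$ satisfy $p^2>n+1$, hence $v_p(N)\leq 1$ and contribute only a factor $p\leq k+1$, while those with $p\leq\sqrt{n+1}$ contribute at most $n+1$ each. This yields an upper bound of the shape
$$N\leq (n+1)^{\pi(\sqrt{n+1})}\prod_{\sqrt{n+1}<p\leq k+1}p,$$
where $\pi$ is the prime-counting function. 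Note that the only prime that the classical Sylvester--Erd\H{o}s theorem is guaranteed to force above $k$ is $p=k+1$ (when $k+1$ is prime), so the sharpening from ``$>k$'' to ``$\geq k+2$'' hinges on controlling the exact power of the critical prime $k+1$ and showing it cannot make up for the absence of larger prime factors.

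For the lower bound I would use the monotonicity of $\binom{m}{k}$ in $m$ together with $n+1\geq 2k+1$ to get $N\geq\binom{2k+1}{k}$, which grows like $4^k$, and compare it with the upper bound. The honest difficulty here is that a crude Chebyshev estimate $\prod_{p\leq k+1}p<4^{k+1}$ is inconclusive, since a factor of size $4^k$ then appears on both sides; so the comparison must be run with the genuinely restricted product $\prod_{\sqrt{n+1}<p\leq k+1}p$ and with the sharper Erd\H{o}s-type counting in which each of the $k$ consecutive integers $n-k+2,\dots,n+1$ is split into its large-prime part and its remaining factor, using that a prime $p$ divides at most $\lfloor k/p\rfloor+1$ of them. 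Carried out carefully, this forces the two bounds to collide for all $(n,k)$ outside a bounded region, reducing the claim to a finite check. Observe that for $k=2$ one has $\pi(k+1)=\pi(3)=2=k$, so the degrees of the two bounds coincide and this case must be handled separately; it amounts to the statement that $8\cdot 9=72$ is the only product of two consecutive integers exceeding $k$ that is $\{2,3\}$-smooth for $n\geq 4$.

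The hard part, as in every Sylvester--Schur type result, will be precisely this bounded boundary region near the threshold $n\approx 2k$ together with the smallest values of $k$: there the generic prime-power bounds are not sharp, and one must refine the estimate on $v_{k+1}(N)$ and then verify the remaining finitely many pairs directly. It is exactly here that the genuine exception $(n,k)=(8,2)$ emerges, where $\binom{9}{2}=36=2^2\cdot 3^2$ has no prime factor $\geq 4$. This finite verification, combined with the refined power estimate for the critical prime $k+1$, constitutes the technical core of the argument, and it coincides with Hanson's refinement of the Sylvester--Schur theorem.
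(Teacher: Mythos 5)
The paper never proves this lemma: the authors state it as a known result, attribute it to Hanson (the bibliography entry actually points to Faulkner's paper on the Sylvester--Schur theorem), and write ``We omit its proof.'' Indeed the lemma is a specialization of Hanson's strengthening of Sylvester--Schur (the product of $k$ consecutive integers each exceeding $k$ has a prime factor $>\frac{3k}{2}$, with exceptions $3\cdot 4$, $8\cdot 9$, $6\cdot 7\cdot 8\cdot 9\cdot 10$; under the constraints $n\ge 4$ and $k\le \frac n2$ only $8\cdot 9$, i.e.\ $(n,k)=(8,2)$, survives). So there is no in-paper argument to compare yours against, and what you sketch is the correct literature route. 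Your reduction to $\binom{n+1}{k}$ via $(n+1)n\cdots(n-k+2)=k!\binom{n+1}{k}$, the bound $p^{v_p(N)}\le n+1$, the identification of the exception through $\binom{9}{2}=36=2^2 3^2$, and the disposal of $k=2$ via the fact that $(8,9)$ is the last pair of consecutive $\{2,3\}$-smooth integers are all accurate.

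Judged as a standalone proof, however, the proposal has a genuine gap, and one step would fail as stated. The comparison you propose --- the lower bound $N\ge\binom{2k+1}{k}\sim 4^k$ against the upper bound $(n+1)^{\pi(\sqrt{n+1})}\prod_{\sqrt{n+1}<p\le k+1}p$ --- cannot ``collide outside a bounded region'': for fixed $k$ your lower bound is constant in $n$, while your upper bound grows without limit in $n$ (the exponent $\pi(\sqrt{n+1})$ alone sees to that), so no contradiction arises for large $n$. One must instead use an $n$-growing lower bound such as $\binom{n+1}{k}\ge\bigl(\frac{n+1}{k}\bigr)^k$, and even then the residual cases form not one bounded region but two infinite families: small $k$ uniformly in $n$ (your $k=2$ analysis is one instance, and $k=3$ needs its own elementary treatment), and $n$ close to $2k$ uniformly in $k$, the latter requiring precisely the Erd\H{o}s decomposition of the $k$ consecutive integers that you gesture at but do not execute. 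Finally, the heart of the matter --- upgrading the Sylvester--Schur conclusion ``$>k$'' to ``$\ge k+2$'' by controlling the contribution of the critical prime $k+1$ --- is not carried out but deferred verbatim to ``Hanson's refinement''; since the lemma \emph{is} Hanson's theorem in the relevant range, that deferral makes the argument circular as a proof, though it is entirely consonant with how the paper itself handles the statement, namely by citation.
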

\begin{proof}[Proof of Theorem \ref{1.1}.] Let $f(x)$ be as in Theorem \ref{1.1}. In view of Lemma \ref{1.3}, it is enough to prove that $f(x)$ can not have a factor in $\Z[x]$ with degree lying in the interval $[\deg\phi(x), (\frac{n}{2}+1)\deg\phi(x))$. 

We first prove that $f(x)$ can not have a  factor in $\Z[x]$ with degree lying in the interval $[\deg\phi(x), 2\deg\phi(x))$. By hypothesis $n+1 \neq 2^u$ for any integer $u\geq 2$. Hence there always exists an odd prime $p$ which divides $n+1$. Hence by applying Theorem \ref{1.2} for $k=1$, we see that $f(x)$ can not have a  factor in $\Z[x]$ with degree lying in the interval $[\deg\phi(x), 2\deg\phi(x))$. 

 Now assume that $k\in [2,\frac{n}{2}]$ and $n\neq 8$.  Then using Lemma \ref{key} and Theorem \ref{1.2}, it follows that $f(x)$ can not have a factor in $\Z[x]$ with degree lying in the interval $[2\deg\phi(x), (\frac{n}{2}+1)\deg\phi(x))$. 
     This completes the proof of the theorem.
    \end{proof}
    \begin{remark}
    	From the proof of Theorem \ref{1.1}, it may be noted that in the case $n+1 = 2^u$ for some integer $u\geq 2$, if $f(x)$ does not have any factor in $\Z[x]$ with degree lying in the interval $[\deg\phi(x), 2\deg\phi(x))$, then $f(x)$ is irreducible over $\Q$. Further, in the case $n=8$, if $f(x)$ does not have any factor in $\Z[x]$ with degree lying in the interval $[2\deg\phi(x), 3\deg\phi(x))$, then $f(x)$ is irreducible over $\Q$.
    \end{remark}
 \medskip
  \vspace{-3mm}

 \end{document}